\theoremstyle{plain} %--default
\newtheorem{theorem}             {Theorem} % [section]
\newtheorem{lemma}  [theorem]{Lemma}
\theoremstyle{definition}
\newtheorem{definition}{Definition}
\theoremstyle{remark}
\DeclareMathOperator{\ord}{ord}
\DeclareMathOperator{\sgn}{sgn}
\def\modd#1 #2{#1\ \mbox{\rm (mod}\ #2\mbox{\rm )}}
\begin{document}

\author{Daniel Tsai}
\address{Nagoya University, Graduate School of Mathematics, 464-8602, Furocho, Chikusa-ku, Nagoya, Japan}
\email{shokuns@math.nagoya-u.ac.jp}
\subjclass[2010]{Primary 11A63}

\title{The invariance of the type of a $v$-palindrome}

\begin{abstract}
The notion of a $v$-palindrome is recently introduced by the author. Later, the author defined the notion of the type of a $v$-palindrome $n$ with respect to a number $m$ which can be repeatedly concatenated to form $n$. We prove that this notion of type is invariant over all $m$.
\end{abstract}
\maketitle

\section{Introduction}
The notion of a $v$-\emph{palindrome} is recently introduced by the author \cite{tsai18,tsai}. We define it here again.

\begin{definition}
Suppose that the canonical factorization of an integer $n\geq1$ is
\begin{equation}
  n=p^{a_1}_1\cdots p^{a_s}_s q_1\cdots q_t,
\end{equation}
where $s,t\geq0$ and $a_1,\ldots,a_s\ge2$ are integers and $p_1,\ldots,p_s,q_1,\ldots,q_t$ are distinct primes. We define
\begin{equation}
  v(n) = \sum^s_{i=1}(p_i+a_i) + \sum^t_{j=1} q_j.
\end{equation}
\end{definition}
\begin{definition}
Suppose that the decimal representation of an integer $n\geq1$ is
\begin{equation}
  n=d_0+d_110+\cdots+d_{L-1}10^{L-1},
\end{equation}
where $L\geq1$ and $0\leq d_0,d_1,\ldots,d_{L-1}<10$ are integers with $d_{L-1}\neq0$. We define
\begin{equation}
  r(n) = d_{L-1}+d_{L-2}10+\cdots+d_{0}10^{L-1}.
\end{equation}
\end{definition}
\begin{definition}
  An integer $n\geq1$ is a $v$-\emph{palindrome} if $10\nmid n$, $n\neq r(n)$, and $v(n) = v(r(n))$.
\end{definition}
The sequence of $v$-palindromes is A338039 in OEIS \cite{oeis}. The main theorem in \cite{tsai} describes a periodic phenomenon pertaining to $v$-palindromes and repeated concatenations. We first give notation for repeated concatenations.
\begin{definition}
For integers $k,L\geq1$, we define
\begin{equation}
    \rho_{k,L}=\overbrace{
1\underbrace{0\ldots0}_\text{$L-1$}1
\underbrace{0\ldots0}_\text{$L-1$}1
\ldots
1\underbrace{0\ldots0}_\text{$L-1$}1
}^\text{$k$}.
  \end{equation}
  That is, there are $k$ ones and between any consecutive pair of them, $L-1$ zeros.
\end{definition}
\begin{definition}
  For integers $n,k\geq1$ be integers with $n$ having $L$ decimal digits, the $k$-copy \emph{repeated concatenation} of $n$ is $n(k) =n \rho_{k,L}$.
\end{definition}
Visually speaking, $n(k)$ is the number formed by writing the decimal digits of $n$ a total of $k$ times, one after another.
\begin{theorem}[{\cite[Theorem 1]{tsai}}]\label{thm:pphen}
Let $n\geq1$ be an integer with $10\nmid n$ and $n\neq r(n)$. There exists an integer $\omega\geq1$ such that for integers $k\geq1$, $n(k)$ is a $v$-palindrome if and only if $n(k+\omega)$ is.
\end{theorem}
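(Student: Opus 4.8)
The plan is to fix $n$, put $m = n$ and $m' = r(n)$ (both having $L$ digits with nonzero last digit, since $10\nmid n$), and reduce the $v$-palindrome condition on $n(k)$ to a single numerical equation. First I would record the structural facts about repeated concatenation: writing $\rho = \rho_{k,L} = (10^{kL}-1)/(10^L-1)$, we have $n(k) = m\rho$, and because reversing the digit string of $m$ written $k$ times yields the digit string of $r(n)$ written $k$ times, also $r(n(k)) = m'\rho$. Moreover $10 \nmid n(k)$ and $n(k) \neq r(n(k))$ hold for every $k$, the latter because a $k$-fold repetition of a block of digits is a palindrome only when the block itself is, whereas $n \neq r(n)$. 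Hence $n(k)$ is a $v$-palindrome if and only if
\[
  D(k) := v(m\rho_{k,L}) - v(m'\rho_{k,L}) = 0,
\]
so it suffices to prove that $D$ is a periodic function of $k$; any period $\omega$ then satisfies the theorem.

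Next I would split $v$ into per-prime contributions. For a prime $p$, letting $v_p(\cdot)$ denote the $p$-adic valuation, set $c_p(N) = p$ if $v_p(N)=1$, $c_p(N)=p+v_p(N)$ if $v_p(N)\ge 2$, and $c_p(N)=0$ otherwise, so that $v(N) = \sum_p c_p(N)$ and $D(k) = \sum_p\bigl(c_p(m\rho)-c_p(m'\rho)\bigr)$. Writing $\alpha=v_p(m)$, $\beta=v_p(m')$ (fixed) and $e=v_p(\rho_{k,L})$, each summand $\Delta_p$ is a function of $(\alpha,\beta,e)$ alone. The key cancellation is that when $e\ge 2$ both exponents $\alpha+e,\beta+e$ exceed $1$, so $\Delta_p=(p+\alpha+e)-(p+\beta+e)=\alpha-\beta$, independent of $e$. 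Thus $\Delta_p$ depends on $k$ only through $\min(e,2)$, and for every prime $p\nmid mm'$ we have $\alpha=\beta=0$, whence $\Delta_p\equiv 0$. Therefore $D(k)=\sum_{p\mid mm'}\Delta_p(k)$ is a finite sum over a fixed set of primes, and it remains to show that each $\Delta_p$, equivalently $\min(v_p(\rho_{k,L}),2)$, is periodic in $k$.

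The technical heart is to control $v_p(\rho_{k,L})$. For $p\in\{2,5\}$ we have $p\nmid 10^{kL}-1$, so $e=0$ for all $k$. For odd $p\nmid 10$, let $d=\ord_p(10)$. Using the lifting-the-exponent lemma I would show that if $d\mid L$ then $v_p(\rho_{k,L})=v_p(k)$, while if $d\nmid L$ then $v_p(\rho_{k,L})$ vanishes unless $(d/\gcd(d,L))\mid k$, in which case it equals a fixed positive constant plus $v_p$ of a linear function of $k$. In every case $v_p(\rho_{k,L})$ is governed by $v_p(k)$ together with a congruence condition on $k$, so $\min(v_p(\rho_{k,L}),2)$ is determined by the residue of $k$ modulo a suitable power of $p$ times $d/\gcd(d,L)$, and is therefore periodic for all $k\ge 1$. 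Taking $\omega$ to be the least common multiple of these finitely many periods makes $D$ periodic, completing the argument. I expect this valuation analysis — pinning down $v_p(\rho_{k,L})$ and its periodicity via $\ord_p(10)$ and the lifting-the-exponent lemma, with separate treatment of the cases $p\mid 10$, $d\mid L$, and $d\nmid L$ — to be the main obstacle; the reduction and the $e\ge 2$ cancellation are comparatively routine once the setup is in place.
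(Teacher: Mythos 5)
Your argument is correct, but note that this paper never proves Theorem \ref{thm:pphen} itself: it is imported from \cite{tsai}, and Sections \ref{sec:pre}--\ref{sec:genproc} merely record the machinery underlying that proof. Measured against that machinery, your proof follows the same overall strategy --- reduce to $v(n\rho_{k,L})=v(r(n)\rho_{k,L})$ via $n(k)=n\rho_{k,L}$ and $r(n(k))=r(n)\rho_{k,L}$, decompose $v$ into per-prime contributions, observe that only primes with $v_p(n)\neq v_p(r(n))$ contribute (your sum over $p\mid n\,r(n)$ contains these as the only nonzero terms), and truncate the exponent of $\rho_{k,L}$ at $2$, which is exactly the structure encoded by the functions $\varphi_{p,\delta}$ of Definition \ref{def:phipd} --- but it swaps out the key arithmetic lemma. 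The paper's framework rests on Lemma \ref{lem:rhodiv}: the multiplicative order $h_{p^\alpha,L}$ of $10^L$ modulo $p^{\alpha+\ord_p(10^L-1)}$ gives the exact criterion $p^\alpha\mid\rho_{k,L}$ if and only if $h_{p^\alpha,L}\mid k$, so $\min(\ord_p(\rho_{k,L}),2)$ is periodic with period $h_{p^2,L}$ and one takes $\omega$ to be a least common multiple of these orders. You instead compute $\ord_p(\rho_{k,L})$ outright by lifting the exponent, splitting on whether $\ord_p(10)$ divides $L$, and obtain the period $p^2\,d/\gcd(d,L)$. The two are equivalent in substance (lifting the exponent is one way to establish the content of Lemma \ref{lem:rhodiv}), and your treatment of the subtle case $d\nmid L$ --- vanishing unless $d/\gcd(d,L)\mid k$, then a positive constant plus $v_p$ of a linear function of $k$ --- is right; the truncation at $2$ is also essential, since $v_p(\rho_{k,L})$ itself is not periodic in $k$. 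What the order-theoretic packaging buys, and the reason the paper uses it, is that the divisibility conditions $h_{p,L}\mid k$, $h_{p^2,L}\mid k$ feed directly into the finer bookkeeping (the sets $S(A,B)$ and the notion of type) developed in the rest of the paper, whereas your explicit valuation formulas would need to be reorganized into such divisibility conditions to serve that purpose.
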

Let $n$ be as in the above theorem, then we have the sequence of repeated concatenations
\begin{equation}\label{eq:rec}
  n, n(2), n(3),\ldots,n(k),\ldots.
\end{equation}
Some or none of the above terms are $v$-palindromes while the others are not. What Theorem \ref{thm:pphen} is saying is that the ``whether $v$-palindromic'' pattern in \eqref{eq:rec} is periodic. This pattern is made clearer by the author in \cite{tsai21}, in which the concept of the \emph{type} of a $v$-palindrome is defined. More precisely, when $n(k)$ is a $v$-palindrome, it will have a type with respect to $n$, denoted in this paper by $\mathbf{Type}(n(k),n)$. However, the same $v$-palindrome $n(k)$ might also be $m(j)$, where $m,j\geq1$ are integers. Thus $\mathbf{Type}(m(j),m)$ also exists. It was posed as a conjecture in \cite{tsai} that these types must agree, whose proof is the aim of this paper.

There is a general procedure devised in \cite{tsai21} which is to be applied to $n$ to clarify the ``whether $v$-palindromic'' pattern in \eqref{eq:rec}. We recall this procedure in Section \ref{sec:genproc} to the extent suitable for our proof of the invariance of type. In Section \ref{sec:genprocnk}, we apply the procedure of Section \ref{sec:genproc} to all the repeated concatenations $n(k)$ at once. Finally in Section \ref{sec:proof}, we prove the invariance of the type of a $v$-palindrome. In Section \ref{sec:pre}, we first gather the definitions, notation, and lemmas needed for the rest of the paper. Much of Sections \ref{sec:pre} and \ref{sec:genproc} are a very quick restatement of content in \cite{tsai21} and so might appear very unmotivated.

$\sgn$ denotes the \emph{sign function} defined by $\sgn(x)=1$ if $x>0$ and $\sgn(x)=-1$ if $x<0$. For a prime $p$ and integer $n\neq0$, $\ord_p(n)$ denotes the greatest integer $a$ such that $p^a\mid n$.

\section{Preliminaries}\label{sec:pre}

\subsection{The numbers $h_{p^\alpha,L}$}
We define certain numbers we denote by $h_{p^\alpha,L}$ and state two lemmas about them, proving the second one.
\begin{definition}
  For a prime power $p^\alpha$ with $p\notin\{2,5\}$ and integer $L\geq1$, $h_{p^\alpha,L}$ denotes the order of $10^L$ regarded as an element of the group $(\mathbb{Z}/p^{\alpha+\ord_p(10^L-1)}\mathbb{Z})^\times$.
\end{definition}
\begin{lemma}[{\cite[Lemma 1]{tsai}}]\label{lem:rhodiv}
  Let $p^\alpha$ be a prime power with $p\notin\{2,5\}$ and $L\geq1$ an integer. Then $h_{p^\alpha,L}\geq2$ and for integers $k\geq1$, $p^\alpha\mid \rho_{k,L}$ if and only if $h_{p^\alpha,L}\mid k$.
\end{lemma}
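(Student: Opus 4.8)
The plan is to prove Lemma \ref{lem:rhodiv} by first obtaining a clean closed form for $\rho_{k,L}$ as a geometric-type sum, and then translating the divisibility condition $p^\alpha \mid \rho_{k,L}$ into a congruence that can be resolved by the theory of the multiplicative order of $10^L$.

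First I would observe that by the definition of $\rho_{k,L}$, its decimal expansion is $1$ in positions $0, L, 2L, \ldots, (k-1)L$ and $0$ elsewhere, so
\begin{equation}
  \rho_{k,L} = \sum_{i=0}^{k-1} 10^{iL} = \frac{10^{kL}-1}{10^L-1}.
\end{equation}
Writing $x = 10^L$ and $d = \ord_p(10^L - 1) = \ord_p(x-1)$, the divisibility $p^\alpha \mid \rho_{k,L}$ becomes the statement that $\ord_p\!\bigl(\tfrac{x^k-1}{x-1}\bigr) \geq \alpha$, equivalently $\ord_p(x^k - 1) \geq \alpha + d$. The crux is therefore to control $\ord_p(x^k-1)$ as $k$ varies. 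Since $p \notin \{2,5\}$ we have $p \nmid 10$, hence $p \nmid x$ and $x$ is a unit modulo every power of $p$. The natural tool is the \emph{lifting the exponent} lemma (or a direct argument with the structure of the cyclic group of units): because $d = \ord_p(x-1) \geq 1$, we have $x \equiv 1 \pmod{p}$ for the relevant primes, and LTE gives $\ord_p(x^k - 1) = d + \ord_p(k)$ for all $k \geq 1$ (using $p$ odd, which holds as $p \neq 2$).

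From this identity the lemma falls out cleanly. The condition $\ord_p(x^k-1) \geq \alpha + d$ is equivalent to $\ord_p(k) \geq \alpha$, i.e. $p^\alpha \mid k$. On the other hand, $h_{p^\alpha,L}$ is defined as the order of $x = 10^L$ in $(\mathbb{Z}/p^{\alpha + d}\mathbb{Z})^\times$; I would show that this order equals exactly $p^\alpha$. Indeed $x^k \equiv 1 \pmod{p^{\alpha+d}}$ means $\ord_p(x^k-1) \geq \alpha + d$, which by the LTE identity is $p^\alpha \mid k$, so the order is precisely $p^\alpha = h_{p^\alpha, L}$. Then $p^\alpha \mid \rho_{k,L} \iff p^\alpha \mid k \iff h_{p^\alpha,L} \mid k$, giving the stated equivalence; and $h_{p^\alpha,L} = p^\alpha \geq p \geq 3 \geq 2$, which dispatches the inequality $h_{p^\alpha,L} \geq 2$.

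I expect the main obstacle to be the careful application of lifting the exponent at the prime $3$, where the standard LTE statement requires the odd-prime hypothesis but has a known exceptional behavior when $4 \mid (x-1)$ type conditions are mishandled; since here $p \geq 3$ is odd this is fine, but I would double-check the base case and the edge case $d$ versus $\ord_p(x^p - 1)$ to ensure the formula $\ord_p(x^k-1) = d + \ord_p(k)$ holds uniformly for all $k \geq 1$ rather than only for $k$ divisible by $p$. A secondary subtlety is confirming that $x \equiv 1 \pmod p$ genuinely holds, i.e. that $d = \ord_p(10^L-1) \geq 1$ for the primes under consideration; this requires noting that $p \nmid 10$ forces $10^L \equiv 1$ to hold modulo $p$ for at least some exponent, but $d$ could a priori be $0$ if $p \nmid 10^L - 1$. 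If $d = 0$, then $x \not\equiv 1 \pmod p$ and LTE does not apply directly; in that case I would instead argue via the order $\mathrm{ord}$ of $10^L$ modulo $p$ and a standard lifting argument through the $p$-adic filtration of the unit group, which shows the order modulo $p^{\alpha+d} = p^{\alpha}$ is $\mathrm{ord} \cdot p^{\alpha}$ and recovers the divisibility criterion. Reconciling these two regimes ($d = 0$ versus $d \geq 1$) uniformly is the part of the argument that will need the most care.
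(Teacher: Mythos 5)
The paper itself offers no proof of this lemma (it is quoted from \cite[Lemma 1]{tsai}), so the comparison below is against the argument that the definition of $h_{p^\alpha,L}$ makes available, which is also the style of the paper's own proof of Lemma \ref{lem:lemh}. Your opening reduction is exactly right: with $x=10^L$ and $d=\ord_p(10^L-1)$, additivity of $\ord_p$ applied to $\rho_{k,L}=(x^k-1)/(x-1)$ gives
\begin{equation*}
  p^\alpha \mid \rho_{k,L}
  \iff \ord_p(x^k-1)\geq \alpha+d
  \iff x^k\equiv 1 \pmod{p^{\alpha+d}}.
\end{equation*}
But at this point you are already done, because $h_{p^\alpha,L}$ is \emph{defined} to be the order of $x$ in $(\mathbb{Z}/p^{\alpha+d}\mathbb{Z})^\times$, and in any finite group $g^k=1$ holds if and only if the order of $g$ divides $k$. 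This yields $p^\alpha\mid\rho_{k,L}\iff h_{p^\alpha,L}\mid k$ with no case distinction on $d$ and no lifting-the-exponent; and $h_{p^\alpha,L}\geq2$ is equally immediate, since $\ord_p(x-1)=d<\alpha+d$ forces $x\not\equiv1\pmod{p^{\alpha+d}}$. Your LTE branch for $d\geq1$ is correct (it even identifies $h_{p^\alpha,L}=p^\alpha$), but it computes a value that the statement never requires.

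The genuine gap is the case $d=0$, which you yourself flag as the part needing the most care. The fallback you sketch there is false as stated: you claim the order of $x$ modulo $p^{\alpha+d}=p^\alpha$ equals $\mathrm{ord}\cdot p^{\alpha}$, where $\mathrm{ord}$ is the order of $x$ modulo $p$. No element of $(\mathbb{Z}/p^\alpha\mathbb{Z})^\times$ can have that order: every order divides the group order $(p-1)p^{\alpha-1}$, whose $p$-part is $p^{\alpha-1}$, while $\mathrm{ord}\cdot p^\alpha$ has $p$-part at least $p^\alpha$. (The true order is $\mathrm{ord}\cdot p^{\beta}$ with $0\leq\beta\leq\alpha-1$, where $\beta$ depends on $\ord_p(x^{\mathrm{ord}}-1)$, a quantity your data do not determine.) Your proof of $h_{p^\alpha,L}\geq2$, which goes through $h_{p^\alpha,L}=p^\alpha\geq3$, is likewise confined to the $d\geq1$ branch. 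So, as written, the proposal establishes the lemma only when $p\mid 10^L-1$. The repair is the observation above: the divisibility criterion is definitional and never needed the value of the order, so the case you expected to be hardest in fact has no content at all.
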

 \begin{lemma}\label{lem:lemh}
   Let $p^\alpha$ be a prime power with $p\notin\{2,5\}$ and $k,L\geq1$ integers. We have
   \begin{equation}\label{eq:formula}
     h_{p^\alpha,Lk} = \frac{h_{p^{\alpha+\ord_p(\rho_{k,L})},L}}{(k,h_{p^{\alpha+\ord_p(\rho_{k,L})},L})}.
   \end{equation}
 \end{lemma}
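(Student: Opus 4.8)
The plan is to recognize that, once the $p$-adic valuation bookkeeping is done, both sides of \eqref{eq:formula} are the orders of elements in one and the same group, at which point the elementary fact about the order of a power of a group element finishes the job. First I would record, directly from its definition, the identity
\[
  \rho_{k,L} = \sum_{j=0}^{k-1} 10^{jL} = \frac{10^{Lk}-1}{10^L-1},
\]
the second equality being the usual geometric-sum formula (equivalently, $10^L-1 \mid 10^{Lk}-1$). Since $p$ is prime, applying $\ord_p$ turns this into
\[
  \ord_p(\rho_{k,L}) = \ord_p(10^{Lk}-1) - \ord_p(10^L-1).
\]
Abbreviating $e = \ord_p(10^L-1)$, $f = \ord_p(10^{Lk}-1)$, and $r = \ord_p(\rho_{k,L})$, this reads $r = f - e$, i.e.\ $(\alpha+r)+e = \alpha+f$.

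The purpose of that last equation is to make the two moduli occurring in \eqref{eq:formula} coincide. Indeed, by the defining property of the numbers $h$, the left-hand side $h_{p^\alpha,Lk}$ is the order of $10^{Lk}$ in $(\mathbb{Z}/p^{\alpha+f}\mathbb{Z})^\times$, while the numerator $h_{p^{\alpha+r},L}$ on the right is the order of $10^L$ in $(\mathbb{Z}/p^{(\alpha+r)+e}\mathbb{Z})^\times = (\mathbb{Z}/p^{\alpha+f}\mathbb{Z})^\times$; both elements are genuine units because $p\notin\{2,5\}$ forces $(10,p)=1$. Writing $G = (\mathbb{Z}/p^{\alpha+f}\mathbb{Z})^\times$ and $g = 10^L \in G$, we have $g^k = 10^{Lk}$, so the left side of \eqref{eq:formula} is the order of $g^k$ in $G$, and if we set $d$ to be the order of $g$ in $G$ then the numerator on the right is exactly $d = h_{p^{\alpha+r},L}$.

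It then remains to invoke the standard fact that, for an element $g$ of order $d$ in any group, the element $g^k$ has order $d/(k,d)$. Applied here this yields
\[
  h_{p^\alpha,Lk} = \frac{d}{(k,d)} = \frac{h_{p^{\alpha+r},L}}{(k,\,h_{p^{\alpha+r},L})} = \frac{h_{p^{\alpha+\ord_p(\rho_{k,L})},L}}{(k,\,h_{p^{\alpha+\ord_p(\rho_{k,L})},L})},
\]
which is precisely \eqref{eq:formula}. I do not anticipate a serious obstacle: the only point demanding care is the valuation arithmetic $r = f-e$, since it is exactly this coincidence of exponents that places both orders inside the common group $G$ and thereby makes them directly comparable; were the moduli different, the two orders would live in unrelated groups and no such clean relation could hold. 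Everything following that reduction is the routine computation of the order of a power.
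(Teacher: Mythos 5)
Your proof is correct and follows essentially the same route as the paper: both use the factorization $10^{Lk}-1=(10^L-1)\rho_{k,L}$ to show the two relevant moduli coincide, and then conclude with the order-of-a-power fact in the resulting common group $(\mathbb{Z}/p^{\alpha+\ord_p(10^{Lk}-1)}\mathbb{Z})^\times$. The only cosmetic difference is that the paper phrases the final step as ``$h_{p^\alpha,Lk}$ is the smallest positive integer $h$ with $h_{p^{\alpha+\ord_p(\rho_{k,L})},L}\mid kh$'' rather than quoting the formula $\ord(g^k)=d/(k,d)$ outright, which is the same group-theoretic fact.
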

 \begin{proof}
    The number $h_{p^\alpha,Lk}$ is the smallest positive integer such that
    \begin{equation}\label{eq:cong}
      (10^{Lk})^{h_{p^\alpha,Lk}} \equiv 1\pmod{p^{\alpha + \ord_p(10^{Lk}-1)}}.
    \end{equation}
    We have
    \begin{equation}
      10^{Lk} -1 = (10^L-1)(10^{L(k-1)}+10^{L(k-2)}+\cdots+1) = (10^L-1)\rho_{k,L},
    \end{equation}
    thus
    \begin{equation}
      \ord_p(10^{Lk} -1) = \ord_p(10^L-1) + \ord_p(\rho_{k,L}).
    \end{equation}
    Hence we can rewrite \eqref{eq:cong} as
    \begin{equation}
      (10^{Lk})^{h_{p^\alpha,Lk}} \equiv 1\pmod{p^{(\alpha + \ord_p(\rho_{k,L}))+\ord_p(10^L-1)}}.
    \end{equation}
    Now the number $h_{p^{\alpha+\ord_p(\rho_{k,L})},L}$ is the smallest positive integer such that
    \begin{equation}
      (10^L)^{h_{p^{\alpha+\ord_p(\rho_{k,L})},L}} \equiv 1\pmod{p^{(\alpha + \ord_p(\rho_{k,L}))+\ord_p(10^L-1)}}.
    \end{equation}
    Hence by the property of cyclic groups, $h_{p^\alpha,Lk}$ is the smallest positive integer for which $h_{p^{\alpha+\ord_p(\rho_{k,L})},L}\mid kh_{p^\alpha,Lk}$, and this is clearly that given by \eqref{eq:formula}.
 \end{proof}
 
 \subsection{The functions $\varphi_{p,\delta}$} We define certain functions we denote by $\varphi_{p,\delta}\colon\mathbb{N}\cup\{0\}\to\mathbb{N}$ and further related concepts.
\begin{definition}\label{def:phipd}
We define functions $\varphi_{p,\delta}\colon\mathbb{N}\cup\{0\}\to\mathbb{N}$, for $p$ a prime and $\delta\geq1$ an integer, in three cases as follows.
  \begin{description}
    \item[{($(p,\delta) = (2,1)$)}] \begin{equation}
    \varphi_{2,1}(\alpha) = \begin{cases}
      2\quad\text{if $\alpha\in\{0,1\}$},\\
      1\quad \text{if $\alpha\geq2$}.
    \end{cases}
  \end{equation}
   \item[{($p\neq 2$ and $\delta=1$)}]
   \begin{equation}
    \varphi_{p,1}(\alpha) = \begin{cases}
      p\quad\text{if $\alpha=0$},\\
      2\quad\text{if $\alpha=1$},\\
      1\quad \text{if $\alpha\geq2$}.
    \end{cases}
  \end{equation}
  \item[{($\delta\geq2$)}]
   \begin{equation}
    \varphi_{p,\delta}(\alpha) = \begin{cases}
      p+\delta\quad\text{if $\alpha=0$},\\
      1+\delta\quad\text{if $\alpha=1$},\\
      \delta\quad \text{if $\alpha\geq2$}.
    \end{cases}
  \end{equation}
  \end{description}
\end{definition}
\begin{definition}
  For $p$ a prime and $\delta\geq1$ an integer, we put $R_{p,\delta} = \varphi_{p,\delta}(\mathbb{N}\cup\{0\})$.
\end{definition}
\begin{lemma}[{\cite[Lemma 3.1]{tsai21}}]\label{Cases}
  For an ordered quadruple $(p,\delta,u,\mu)$ of integers, where $p$ is a prime, $\delta\geq1$, $u\in R_{p,\delta}$, and $\mu\ge0$, exactly one of the following is the case.
  \renewcommand{\labelenumi}{[\roman{enumi}]}
  \begin{itemize}
    \item[{\rm [i]}] $ \varphi^{-1}_{p,\delta}(u)=\{0\}$ and $\mu=0$, or $\varphi^{-1}_{p,\delta}(u)=\{1\}$ and $\mu=1$, or $\varphi^{-1}_{p,\delta}(u)=\{0,1\}$ and $\mu=1$, \label{item1}
      \item[{\rm [ii]}] $\varphi^{-1}_{p,\delta}(u)=\{1\}$ and $\mu=0$,
      \item[{\rm [iii]}] $\varphi^{-1}_{p,\delta}(u)=\{0,1\}$ and $\mu=0$,
      \item[{\rm [iv]}] $\varphi^{-1}_{p,\delta}(u)=\mathbb{N}\setminus\{1\}$ and $\mu=1$,
      \item[{\rm [v]}] $\varphi^{-1}_{p,\delta}(u)=\mathbb{N}\setminus\{1\}$ and $\mu=0$,
      \item[{\rm [vi]}] $\varphi^{-1}_{p,\delta}(u)=\mathbb{N}\setminus\{1\}$ and $\mu\ge2$,
      \item[{\rm [vii]}] otherwise.
  \end{itemize}
\end{lemma}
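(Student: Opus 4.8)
The plan is to reduce the seemingly elaborate seven-way classification to two simple observations: first, that the fiber $\varphi^{-1}_{p,\delta}(u)$ can only ever be one of four specific sets, and second, that once the fiber is fixed, the conditions [i]--[vii] partition the possible values of $\mu$. The key point is that [vii] is defined as the negation of [i]--[vi], so the whole lemma amounts to showing that [i]--[vi] are mutually exclusive and that the residual combinations of (fiber, $\mu$) are exactly what [vii] absorbs.

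First I would compute the fibers of $\varphi_{p,\delta}$ directly from Definition \ref{def:phipd}, treating its three defining cases in turn. In the case $(p,\delta)=(2,1)$ the function takes the value $2$ on $\{0,1\}$ and the value $1$ on every $\alpha\geq2$, so its only nonempty fibers are $\{0,1\}$ and $\mathbb{N}\setminus\{1\}$. In the case $p\neq2$, $\delta=1$ the three values $p$, $2$, $1$ are pairwise distinct (here one uses $p\geq3$), so the fibers over $p$, $2$, $1$ are $\{0\}$, $\{1\}$, $\mathbb{N}\setminus\{1\}$ respectively. In the case $\delta\geq2$ the three values $p+\delta$, $1+\delta$, $\delta$ are again pairwise distinct, giving the fibers $\{0\}$, $\{1\}$, $\mathbb{N}\setminus\{1\}$. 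Consequently, for every $u\in R_{p,\delta}$ the fiber $\varphi^{-1}_{p,\delta}(u)$ is one of the four sets
\[ \{0\},\quad \{1\},\quad \{0,1\},\quad \mathbb{N}\setminus\{1\}. \]

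With the fiber $P:=\varphi^{-1}_{p,\delta}(u)$ restricted to these four possibilities, I would then verify the dichotomy by inspection of $\mu$. Reading off [i]--[vi], each specifies a concrete pair $(P,\mu)$ (or, in [vi], the family $P=\mathbb{N}\setminus\{1\}$ with $\mu\geq2$), and these pairs are pairwise distinct; since [vii] is the negation of [i]--[vi], it suffices to check that for each of the four fibers the stated $\mu$-ranges cover $\mathbb{N}\cup\{0\}$ without overlap. Explicitly: when $P=\{0\}$ one gets [i] for $\mu=0$ and [vii] for $\mu\geq1$; when $P=\{1\}$ one gets [ii], [i], [vii] according as $\mu=0$, $\mu=1$, $\mu\geq2$; when $P=\{0,1\}$ one gets [iii], [i], [vii] according as $\mu=0$, $\mu=1$, $\mu\geq2$; and when $P=\mathbb{N}\setminus\{1\}$ one gets [v], [iv], [vi] according as $\mu=0$, $\mu=1$, $\mu\geq2$. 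In every line exactly one condition applies, which is the assertion.

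The argument is essentially bookkeeping, so I do not anticipate a genuine obstacle; the only point requiring care is the distinctness of the values of $\varphi_{p,\delta}$ in each branch, in particular the exclusion of $p=2$ in the $\delta=1$ branch, which is exactly what forces the fibers over $0$ and over $1$ to be distinct singletons rather than merging into $\{0,1\}$. Once that is in place the remaining verification is the finite table above.
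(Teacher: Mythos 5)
Your proposal is correct. Note that the paper itself gives no proof of this lemma: it is imported verbatim as \cite[Lemma 3.1]{tsai21}, so there is no in-paper argument to compare against. Your verification is complete and self-contained: the fiber computation (that $\varphi^{-1}_{p,\delta}(u)$ is always one of $\{0\}$, $\{1\}$, $\{0,1\}$, $\mathbb{N}\setminus\{1\}$, with $\{0,1\}$ occurring only when $(p,\delta)=(2,1)$) is exactly right, and since [vii] is defined as the complement of [i]--[vi], the only substantive content is that [i]--[vi] are pairwise exclusive, which follows from your observation that they name pairwise distinct pairs $(\varphi^{-1}_{p,\delta}(u),\mu)$; your four-line table then confirms disjoint coverage for each possible fiber. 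Your flagged point of care --- that $p\neq2$ in the $\delta=1$ branch forces the values $p$, $2$, $1$ to be distinct, preventing the fibers over $0$ and $1$ from merging --- is indeed the one place where the hypothesis on $p$ enters.
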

\begin{definition}\label{def:D}
  For a quadruple $(p,\delta,u,\mu)$ as in Lemma \ref{Cases}, $D(p,\delta,u,\mu)$ denotes the case which holds. For example, $D(p,\delta,u,\mu)=[\mathrm{ii}]$ if and only if $\varphi^{-1}_{p,\delta}(u)=\{1\}$ and $\mu=0$, and $D(p,\delta,u,\mu)=[\mathrm{iv}]$ if and only if $\varphi^{-1}_{p,\delta}(u)=\mathbb{N}\setminus\{1\}$ and $\mu=1$, and etc.
\end{definition}

\subsection{The sets $S(A,B)$} We define certain sets we denote by $S(A,B)$.
\begin{definition}
  For finite sets $A,B\subseteq\mathbb{N}$, we put
  \begin{equation}
    S(A,B) = \{x\in\mathbb{Z}\colon (a\mid x\text{ for every }a\in A)\text{ and }(b\nmid x\text{ for every }b\in B)\}.
  \end{equation}
\end{definition}

\section{General procedure}\label{sec:genproc}
Throughout this section we fix an integer $n\geq1$ with $10\nmid n$, $n\neq r(n)$, and $L$ decimal digits and describe the general procedure associated to it.
\begin{description}
  \item[Step 1] Factorize both $n$ and $r(n)$,
  \begin{align}
    n&= p^{a_1}_1\cdots p^{a_m}_m,\\
    r(n)& = p^{b_1}_1\cdots p^{b_m}_m,
  \end{align}
  where $p<\cdots <p_m$ are primes, and $a_i,b_i\geq0$ are integers, not both $0$.
  \item[Step 2] Look for those primes $p_i$ for which $a_i\neq b_i$, they are called the \emph{crucial primes} (of $n$) and their set denoted by $K$ (or $K(n)$ if we wish to indicate $n$). Since $n\neq r(n)$, $K$ is a nonempty finite set. We are only going to focus on these primes, and so we denote them again by $p_1<\cdots<p_m$, and the exponents are $a_i,b_i$. Define the numbers $\delta_i = a_i-b_i$, $\mu_i = \min(a_i,b_i)$, for $1\leq i\leq m$.
  \item[Step 3] The \emph{characteristic equation} (for $n$) is
  \begin{equation}
    \sgn(\delta_1)u_1+\sgn(\delta_2)u_2+\cdots+\sgn(\delta_m)u_m = 0.
  \end{equation}
  We want to solve it for $u_i \in R_{p_i,|\delta_i|}$, and these solutions are called the \emph{characteristic solutions} (of $n$). The set of characteristic solutions is denoted by $\mathcal{U}$ (or $\mathcal{U}(n)$ if we wish to indicate $n$). If $\mathcal{U}=\varnothing$, then conclude that $c(n) = \infty$ and $\omega_0(n) = 1$. Otherwise, suppose that $\mathcal{U} = \{\mathbf{u}_1,\ldots,\mathbf{u}_t\}$, where we write $\mathbf{u}_l = (u_{li})^m_{i=1}$, for $1\leq l\leq t$.
  \item[Step 4] We make two tables of the crucial primes $p_i$ ($1\leq i\leq m$) versus the characteristic solutions $\mathbf{u}_l$ ($1\le l\le t$) as follows.
  
  The first is where in the $(p_i,\mathbf{u}_l)$-entry we have $D(p_i,|\delta_i|,u_{li},\mu_i)$\iffalse(defined in Definition \ref{def:D})\fi, and is called the \emph{first table} (for $n$). We illustrate the generic first table as follows.
  \begin{table}[H]
 \caption{The first table.}
 \label{table:indicatorfunB}
 \centering
  \begin{tabular}{cccccc}
   \hline
   $$ & $\mathbf{u}_1$ & $\cdots$ & $\mathbf{u}_l$& $\cdots$& $\mathbf{u}_t$\\
   \hline \hline
   $p_1$ & $D(p_1,|\delta_1|,u_{11},\mu_1)$& $\cdots$& $D(p_1,|\delta_1|,u_{l1},\mu_1)$& $\cdots$& $D(p_1,|\delta_1|,u_{t1},\mu_1)$\\
   $\vdots$ & $\vdots$& $$& $\vdots$& $$ & $\vdots$\\
   $p_i$ & $D(p_i,|\delta_i|,u_{1i},\mu_i)$& $\cdots$& $D(p_i,|\delta_i|,u_{li},\mu_i)$& $\cdots$& $D(p_i,|\delta_i|,u_{ti},\mu_i)$\\
   $\vdots$ & $\vdots$& $$& $\vdots$& $$& $\vdots$\\
   $p_m$ & $D(p_m,|\delta_m|,u_{1m},\mu_m)$& $\cdots$& $D(p_m,|\delta_m|,u_{lm},\mu_m)$& $\cdots$& $D(p_m,|\delta_m|,u_{tm},\mu_m)$\\
   \hline
  \end{tabular}
\end{table}
The second is where in the $(p_i,\mathbf{u}_l)$-entry we have $T_{p_i,\mathbf{u}_l}=(A_{p_i,\mathbf{u}_l},B_{p_i,\mathbf{u}_l})$, which we now define. For a prime power $p^\alpha$ with $p\notin\{2,5\}$, we abbreviate $h_{p^\alpha,L}$ as $h_{p^\alpha}$. For $p_i\notin\{2,5\}$, we put
\begin{equation}\label{defnT1}
      T_{p_i,\mathbf{u}_l}=(A_{p_i,\mathbf{u}_l},B_{p_i,\mathbf{u}_l})=
      \begin{cases}
        (\varnothing,\{h_{p_i}\})& \text{if $D(p_i,|\delta_i|,u_{li},\mu_i)=[\mathrm{i}]$,} \\
        (\{h_{p_i}\},\{h_{p^2_i}\}) & \text{\text{if $D(p_i,|\delta_i|,u_{li},\mu_i)=[\mathrm{ii}]$,}} \\
        (\varnothing,\{h_{p^2_i}\}) & \text{\text{if $D(p_i,|\delta_i|,u_{li},\mu_i)=[\mathrm{iii}]$,}} \\
       (\{h_{p_i}\},\varnothing) & \text{\text{if $D(p_i,|\delta_i|,u_{li},\mu_i)=[\mathrm{iv}],$}} \\
        (\{h_{p^2_i}\},\varnothing)& \text{\text{if $D(p_i,|\delta_i|,u_{li},\mu_i)=[\mathrm{v}].$}}
      \end{cases}
    \end{equation}
    For $p_i\in\{2,5\}$, we put
    \begin{equation}\label{defnT2}
       T_{p_i,\mathbf{u}_l}=(A_{p_i,\mathbf{u}_l},B_{p_i,\mathbf{u}_l})=
      \begin{cases}
        (\varnothing,\varnothing)& \text{if $D(p_i,|\delta_i|,u_{li},\mu_i)=[\mathrm{i}]$,} \\
        (\varnothing,\{1\}) & \text{\text{if $D(p_i,|\delta_i|,u_{li},\mu_i)=[\mathrm{ii}]$,}} \\
        (\varnothing,\varnothing) & \text{\text{if $D(p_i,|\delta_i|,u_{li},\mu_i)=[\mathrm{iii}],$}} \\
       (\varnothing,\{1\}) & \text{\text{if $D(p_i,|\delta_i|,u_{li},\mu_i)=[\mathrm{iv}],$}} \\
        (\varnothing,\{1\})& \text{\text{if $D(p_i,|\delta_i|,u_{li},\mu_i)=[\mathrm{v}]$.}}
      \end{cases}
    \end{equation}
    For any $p_i$, we put
    \begin{equation}\label{defnT3}
    T_{p_i,\mathbf{u}_l}=(A_{p_i,\mathbf{u}_l},B_{p_i,\mathbf{u}_l})=
    \begin{cases}
      (\varnothing,\varnothing) & \text{if $D(p_i,|\delta_i|,u_{li},\mu_i)=[\mathrm{vi}]$,} \\
      (\varnothing,\{1\}) & \text{if $D(p_i,|\delta_i|,u_{li},\mu_i)=[\mathrm{vii}]$.}
    \end{cases}
    \end{equation}
This table of entries $T_{p_i,\mathbf{u}_l}$ is called the \emph{second table} (for $n$) and we illustrate the generic one as follows.
\begin{table}[H]
 \caption{The second table.}
 \label{table:indicatorfunB}
 \centering
  \begin{tabular}{cccccc}
   \hline
   $$ & $\mathbf{u}_1$ & $\cdots$ & $\mathbf{u}_l$& $\cdots$& $\mathbf{u}_t$\\
   \hline \hline
   $p_1$ & $(A_{p_1,\mathbf{u}_1},B_{p_1,\mathbf{u}_1})$& $\cdots$& $(A_{p_1,\mathbf{u}_l},B_{p_1,\mathbf{u}_l})$& $\cdots$& $(A_{p_1,\mathbf{u}_t},B_{p_1,\mathbf{u}_t})$\\
   $\vdots$ & $\vdots$& $$& $\vdots$& $$ & $\vdots$\\
   $p_i$ & $(A_{p_i,\mathbf{u}_1},B_{p_i,\mathbf{u}_1})$& $\cdots$& $(A_{p_i,\mathbf{u}_l},B_{p_i,\mathbf{u}_l})$& $\cdots$& $(A_{p_i,\mathbf{u}_t},B_{p_i,\mathbf{u}_t})$\\
   $\vdots$ & $\vdots$& $$& $\vdots$& $$& $\vdots$\\
   $p_m$ & $(A_{p_m,\mathbf{u}_1},B_{p_m,\mathbf{u}_1})$& $\cdots$& $(A_{p_m,\mathbf{u}_l},B_{p_m,\mathbf{u}_l})$& $\cdots$& $(A_{p_m,\mathbf{u}_t},B_{p_m,\mathbf{u}_t})$\\
   \hline
  \end{tabular}
\end{table}
The first table helps us to construct the second table because the determination of $T_{p_i,\mathbf{u}_l}$ depends on $D(p_i,|\delta_i|,u_{li},\mu_i)$.

For each $\mathbf{u}_l$, we define the sets
\begin{gather}
  A_{\mathbf{u}_l} =\bigcup^m_{i=1}A_{p_i,\mathbf{u}_{l}}, \quad B_{\mathbf{u}_l} =\bigcup^m_{i=1}B_{p_i,\mathbf{u}_{l}}, \\
  S_{\mathbf{u}_l} = S(A_{\mathbf{u}_l}, B_{\mathbf{u}_l}).
\end{gather}
Visually speaking, $A_{\mathbf{u}_l}$ and $B_{\mathbf{u}_l}$ are respectively the union of the left and right coordinates of the entries in the $\mathbf{u}_l$-column. We then define
\begin{equation}
  S = \bigsqcup^t_{l=1} S_{\mathbf{u}_l}
\end{equation}
(that the sets $S_{\mathbf{u}_l}$ are pairwise disjoint follows from \cite[Theorem 4.4]{tsai21}).
Then we have the following theorem.
\begin{theorem}[\cite{tsai21}]
  For $k\geq1$, the number $n(k)$ is a $v$-palindrome if and only if $k\in S$.
\end{theorem}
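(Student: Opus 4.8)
The plan is to reduce the $v$-palindrome condition on $n(k)$ to the characteristic equation by a single structural identity for $v$, and then to convert the resulting constraints on $\ord_{p_i}(\rho_{k,L})$ into the divisibility conditions recorded by the second table.

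First I record that $n(k) = n\rho_{k,L}$ and, since reversing the digit string of $k$ concatenated copies of the $L$-digit number $n$ yields $k$ concatenated copies of its $L$-digit reversal, $r(n(k)) = r(n)\rho_{k,L}$. The last digit of $n(k)$ equals that of $n$ and is nonzero, so $10 \nmid n(k)$; and $n \neq r(n)$ as $L$-digit strings forces $n(k) \neq r(n(k))$. Both side conditions are therefore automatic, and $n(k)$ is a $v$-palindrome if and only if $v(n\rho_{k,L}) = v(r(n)\rho_{k,L})$.

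The crux is the observation that $v$ decomposes over primes as $v(N) = \sum_p g_p(\ord_p N)$, where $g_p(0) = 0$, $g_p(1) = p$, and $g_p(\alpha) = p + \alpha$ for $\alpha \geq 2$, and that $\varphi_{p,\delta}$ records the increment of $g_p$ under raising the exponent by $\delta$: checking the three cases of Definition \ref{def:phipd} gives $\varphi_{p,\delta}(\alpha) = g_p(\alpha+\delta) - g_p(\alpha)$ for all $\alpha \geq 0$ and $\delta \geq 1$, the only subtlety being that for $(p,\delta) = (2,1)$ one has $\varphi_{2,1}(0) = \varphi_{2,1}(1) = 2$ because $g_2(1) - g_2(0) = g_2(2) - g_2(1) = 2$. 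Since $\ord_p(n\rho_{k,L}) = \ord_p(n) + \ord_p(\rho_{k,L})$ and the non-crucial primes contribute identically to both sides and cancel, I obtain
\begin{equation}
v(n\rho_{k,L}) - v(r(n)\rho_{k,L}) = \sum_{i=1}^{m} \sgn(\delta_i)\,\varphi_{p_i,|\delta_i|}\bigl(\mu_i + \ord_{p_i}(\rho_{k,L})\bigr),
\end{equation}
each crucial term being $\pm\varphi_{p_i,|\delta_i|}(\mu_i + \ord_{p_i}(\rho_{k,L}))$ according to whether $a_i > b_i$ or $a_i < b_i$. Hence $n(k)$ is a $v$-palindrome if and only if the tuple $\mathbf{w}(k) = (\varphi_{p_i,|\delta_i|}(\mu_i + \ord_{p_i}(\rho_{k,L})))_{i=1}^{m}$, whose entries automatically lie in $R_{p_i,|\delta_i|}$, satisfies the characteristic equation, i.e.\ $\mathbf{w}(k) \in \mathcal{U}$; when $\mathcal{U} = \varnothing$ this never occurs and $S = \varnothing$, consistent with $c(n) = \infty$.

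Next, fixing a characteristic solution $\mathbf{u}_l$, I translate $\mathbf{w}(k) = \mathbf{u}_l$, that is the $m$ membership conditions $\ord_{p_i}(\rho_{k,L}) \in \varphi_{p_i,|\delta_i|}^{-1}(u_{li}) - \mu_i$, into divisibility statements on $k$. For $p_i \notin \{2,5\}$, Lemma \ref{lem:rhodiv} gives $\ord_{p_i}(\rho_{k,L}) \geq \beta \iff h_{p_i^\beta,L} \mid k$, so the requirements $\ord_{p_i}(\rho_{k,L}) = 0$, $= 1$, $\geq 1$, $\geq 2$ become $h_{p_i}\nmid k$; $h_{p_i}\mid k$ and $h_{p_i^2}\nmid k$; $h_{p_i}\mid k$; and $h_{p_i^2}\mid k$ respectively. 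Running through the seven possibilities for $(\varphi_{p_i,|\delta_i|}^{-1}(u_{li}), \mu_i)$ classified by $D(p_i,|\delta_i|,u_{li},\mu_i)$ in Lemma \ref{Cases}, each membership condition collapses to exactly the pair $T_{p_i,\mathbf{u}_l} = (A_{p_i,\mathbf{u}_l}, B_{p_i,\mathbf{u}_l})$ of \eqref{defnT1} (cases [i]--[v]) and \eqref{defnT3} (cases [vi], [vii]); in particular case [vii], where $\varphi_{p_i,|\delta_i|}^{-1}(u_{li}) - \mu_i$ contains no nonnegative integer, is correctly rendered unsatisfiable by $B = \{1\}$. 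For $p_i \in \{2,5\}$ one checks $\ord_{p_i}(\rho_{k,L}) = 0$ for every $k$, since only the leading term of $\rho_{k,L}$ survives modulo $2$ or $5$; the lone surviving requirement is then $\mu_i \in \varphi_{p_i,|\delta_i|}^{-1}(u_{li})$, independent of $k$, and \eqref{defnT2}, \eqref{defnT3} encode precisely this, returning $(\varnothing,\varnothing)$ when it holds and $B = \{1\}$ when it fails.

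Finally, because the $m$ per-index conditions combine by taking the unions $A_{\mathbf{u}_l} = \bigcup_i A_{p_i,\mathbf{u}_l}$ and $B_{\mathbf{u}_l} = \bigcup_i B_{p_i,\mathbf{u}_l}$, I get $\mathbf{w}(k) = \mathbf{u}_l \iff k \in S(A_{\mathbf{u}_l},B_{\mathbf{u}_l}) = S_{\mathbf{u}_l}$, and summing over $l$ yields $\mathbf{w}(k) \in \mathcal{U} \iff k \in \bigcup_{l} S_{\mathbf{u}_l} = S$ (a disjoint union by \cite[Theorem 4.4]{tsai21}), which is the claim. The main obstacle is the uniform bookkeeping of the preceding paragraph: confirming case by case, and separately for $p_i \in \{2,5\}$, that the set-theoretic condition on $\ord_{p_i}(\rho_{k,L})$ matches the tabulated entry. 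The delicate points are the collision $\varphi_{2,1}^{-1}(2) = \{0,1\}$, which is precisely why the $\{0,1\}$-preimage cases (notably [iii]) must be isolated, and case [vii], where one must check that the shift by $\mu_i$ drives the finite preimage entirely negative.
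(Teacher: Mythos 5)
Your proof is correct, but note that this paper contains no proof of the statement to compare against: the theorem is imported verbatim from \cite{tsai21} (the paper explicitly says Sections \ref{sec:pre} and \ref{sec:genproc} are a quick restatement of that preprint's content), so the only question is whether your blind argument is sound, and it is. The two pillars both check out: the prime-wise decomposition $v(N)=\sum_p g_p(\ord_p N)$ with $g_p(0)=0$, $g_p(1)=p$, $g_p(\alpha)=p+\alpha$ for $\alpha\ge2$, together with the increment identity $\varphi_{p,\delta}(\alpha)=g_p(\alpha+\delta)-g_p(\alpha)$, is verified in all three branches of Definition \ref{def:phipd} (including the $(2,1)$ collision $g_2(1)-g_2(0)=g_2(2)-g_2(1)=2$); and your translation of $\mu_i+\ord_{p_i}(\rho_{k,L})\in\varphi_{p_i,|\delta_i|}^{-1}(u_{li})$ into the pairs $(A_{p_i,\mathbf{u}_l},B_{p_i,\mathbf{u}_l})$ via Lemma \ref{lem:rhodiv} reproduces \eqref{defnT1}, \eqref{defnT2}, \eqref{defnT3} case by case, with [vii] and the failing $p\in\{2,5\}$ cases correctly rendered impossible by $B=\{1\}$. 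Your argument is exactly the one the paper's machinery (the functions $\varphi_{p,\delta}$, the classification of Lemma \ref{Cases}, the two tables) is engineered to support, so it is safe to regard it as a faithful reconstruction of the cited proof rather than a genuinely different route; as a small bonus, your equivalence $\mathbf{w}(k)=\mathbf{u}_l\iff k\in S_{\mathbf{u}_l}$ yields the pairwise disjointness of the $S_{\mathbf{u}_l}$ directly, which the paper instead cites from \cite[Theorem 4.4]{tsai21}.
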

Since the $\mathbf{u}_l$ are pairwise disjoint, we can make the following definition.
\begin{definition}
Suppose that $n(k)$ is a $v$-palindrome, where $k\geq1$. The \emph{type} of $n(k)$ with respect to $n$ is the unique $\mathbf{u}\in \mathcal{U}(n)$ such that $k\in S_{\mathbf{u}}$ and we denote $\mathbf{Type}(n(k),n)=\mathbf{u}$.
\end{definition}

Actually some of the $S_{\mathbf{u}_l}$ might be empty, and so there will be no $v$-palindrome $n(k)$ of type $\mathbf{u}_l$ with respect to $n$. Those $\mathbf{u}_l$ for which $S_{\mathbf{u}_l}\neq \varnothing$ are called the \emph{nondegenerate} characteristic solutions (of $n$) and their set is denoted by $\mathcal{U}^\ast$ (or $\mathcal{U}^\ast(n)$ if we wish to indicate $n$).
  
\end{description}

\section{General procedure applied to all the $n(k)$}\label{sec:genprocnk}
Throughout this section we fix an integer $n\geq1$ with $10\nmid n$, $n\neq r(n)$, and $L$ decimal digits and apply the general procedure of Section \ref{sec:genproc} simultaneously to all the $n(k)$, for $k\geq1$.
\begin{description}
  \item[Step 1] Factorize both $n$ and $r(n)$,
  \begin{align}
    n&= p^{a_1}_1\cdots p^{a_m}_m,\\
    r(n)& = p^{b_1}_1\cdots p^{b_m}_m,
  \end{align}
  where $p<\cdots <p_m$ are primes, and $a_i,b_i\geq0$ are integers, not both $0$. For $k\geq1$, we abbreviate $\rho_{k,L}$ as $\rho_k$. Since $n(k)=n\rho_{k}$ and $r(n(k)) = r(n)(k) =r(n)\rho_{k}$,
  \begin{align}
    n(k)&= p^{a_1}_1\cdots p^{a_m}_m\rho_{k},\\
    r(n(k))& = p^{b_1}_1\cdots p^{b_m}_m\rho_{k}.
  \end{align}
  Hence we see that for $1\leq i\leq m$,
  \begin{align}
    \ord_{p_i}(n(k)) = a_i+\ord_{p_i}(\rho_{k}),\label{eq:step11}\\
    \ord_{p_i}(r(n(k))) = b_i + \ord_{p_i}(\rho_{k}),\label{eq:step12}
  \end{align}
  and that for any other prime $p$,
  \begin{equation}
    \ord_{p}(n(k)) = \ord_{p}(r(n(k))) = \ord_p(\rho_{k}).\label{eq:step13}
  \end{equation}
  
  \item[Step 2] In view of the equalities \eqref{eq:step11}, \eqref{eq:step12}, and \eqref{eq:step13} in Step 1, the crucial primes of $n(k)$ are the same as that of $n$. That is, $K(n(k)) = K(n)$, and we denote it simply by $K$. We are only going to focus on these primes, and so we denote them again by $p_1<\cdots<p_m$, and we redefine
  \begin{align}
    a_i=\ord_{p_i}(n),\quad b_i=\ord_{p_i}(r(n)),
  \end{align}
  for $1\leq i\leq m$.
  Define the numbers $\delta_i = a_i-b_i$, $\mu_i = \min(a_i,b_i)$, for $1\leq i\leq m$. Since we are applying the general procedure to all the $n(k)$, we also define, for $1\leq i\leq m$ and all $k\geq1$,
  \begin{align}
    a_{ik} &= \ord_{p_i}(n(k)) = a_i + \ord_{p_i}(\rho_{k}),\\
    b_{ik} &= \ord_{p_i}(r(n(k))) = b_i + \ord_{p_i}(\rho_{k}),\\
    \delta_{ik} &= a_{ik} - b_{ik} = a_i-b_i = \delta_i,\label{eq:deltainv}\\
    \mu_{ik} &=  \min(a_{ik},b_{ik}) = \min(a_i,b_i)+\ord_{p_i}(\rho_{k}) = \mu_i + \ord_{p_i}(\rho_{k}). 
  \end{align}
  Hence we see clearly how the $a_{ik},b_{ik},\delta_{ik},\mu_{ik}$ changes as $k$ increases. More precisely, we see that the $\delta_{ik}$ do not change and that the changes in the $a_{ik},b_{ik},\mu_{ik}$ depend only on $\ord_{p_i}(\rho_{k})$. We denote $x_{ik} = \ord_{p_i}(\rho_{k})$, for $1\leq i\leq m$ and all $k\geq1$.
  
  \item[Step 3] In view of \eqref{eq:deltainv}, the characteristic equation for all the $n(k)$ are the same and it is
  \begin{equation}
    \sgn(\delta_1)u_1+\sgn(\delta_2)u_2+\cdots+\sgn(\delta_m)u_m = 0.
  \end{equation}
  Moreover, the characteristic solutions for all the $n(k)$ are the same. That is,
  \begin{equation}
  \mathcal{U}(n) = \mathcal{U}(n(2))=\mathcal{U}(n(3))=\cdots=\mathcal{U}.
  \end{equation}
  If $\mathcal{U}=\varnothing$, conclude that for all $k\geq1$, $n(k)$ is not a $v$-palindrome. Otherwise, suppose that $\mathcal{U} = \{\mathbf{u}_1,\ldots,\mathbf{u}_t\}$, where we write $\mathbf{u}_l = (u_{li})^m_{i=1}$, for $1\leq l\leq t$.
  \item[Step 4] We illustrate the first and second tables for $n(k)$ as follows.
  \begin{table}[H]
 \caption{The first table for $n(k)$.}
 \label{table:indicatorfunB}
 \centering
  \begin{tabular}{cccccc}
   \hline
   $$ & $\mathbf{u}_1$ & $\cdots$ & $\mathbf{u}_l$& $\cdots$& $\mathbf{u}_t$\\
   \hline \hline
   $p_1$ & $D(p_1,|\delta_1|,u_{11},\mu_1+x_{1k})$& $\cdots$& $D(p_1,|\delta_1|,u_{l1},\mu_1+x_{1k})$& $\cdots$& $D(p_1,|\delta_1|,u_{t1},\mu_1+x_{1k})$\\
   $\vdots$ & $\vdots$& $$& $\vdots$& $$ & $\vdots$\\
   $p_i$ & $D(p_i,|\delta_i|,u_{1i},\mu_i+x_{ik})$& $\cdots$& $D(p_i,|\delta_i|,u_{li},\mu_i+x_{ik})$& $\cdots$& $D(p_i,|\delta_i|,u_{ti},\mu_i+x_{ik})$\\
   $\vdots$ & $\vdots$& $$& $\vdots$& $$& $\vdots$\\
   $p_m$ & $D(p_m,|\delta_m|,u_{1m},\mu_m+x_{mk})$& $\cdots$& $D(p_m,|\delta_m|,u_{lm},\mu_m+x_{mk})$& $\cdots$& $D(p_m,|\delta_m|,u_{tm},\mu_m+x_{mk})$\\
   \hline
  \end{tabular}
\end{table}
For the second table, a third subscript of $k$ is added to the $A_{p_i,\mathbf{u}_l}$ and $B_{p_i,\mathbf{u}_l}$ to indicate the dependence on $k$.
\begin{table}[H]
 \caption{The second table for $n(k)$.}
 \label{table:indicatorfunB}
 \centering
  \begin{tabular}{cccccc}
   \hline
   $$ & $\mathbf{u}_1$ & $\cdots$ & $\mathbf{u}_l$& $\cdots$& $\mathbf{u}_t$\\
   \hline \hline
   $p_1$ & $(A_{p_1,\mathbf{u}_1,k},B_{p_1,\mathbf{u}_1,k})$& $\cdots$& $(A_{p_1,\mathbf{u}_l,k},B_{p_1,\mathbf{u}_l,k})$& $\cdots$& $(A_{p_1,\mathbf{u}_t,k},B_{p_1,\mathbf{u}_t,k})$\\
   $\vdots$ & $\vdots$& $$& $\vdots$& $$ & $\vdots$\\
   $p_i$ & $(A_{p_i,\mathbf{u}_1,k},B_{p_i,\mathbf{u}_1,k})$& $\cdots$& $(A_{p_i,\mathbf{u}_l,k},B_{p_i,\mathbf{u}_l,k})$& $\cdots$& $(A_{p_i,\mathbf{u}_t,k},B_{p_i,\mathbf{u}_t,k})$\\
   $\vdots$ & $\vdots$& $$& $\vdots$& $$& $\vdots$\\
   $p_m$ & $(A_{p_m,\mathbf{u}_1,k},B_{p_m,\mathbf{u}_1,k})$& $\cdots$& $(A_{p_m,\mathbf{u}_l,k},B_{p_m,\mathbf{u}_l,k})$& $\cdots$& $(A_{p_m,\mathbf{u}_t,k},B_{p_m,\mathbf{u}_t,k})$\\
   \hline
  \end{tabular}
\end{table}
For each $\mathbf{u}_l$ and all $k\geq1$, we define the sets
\begin{gather}
  A_{\mathbf{u}_l,k} =\bigcup^m_{i=1}A_{p_i,\mathbf{u}_{l},k}, \quad B_{\mathbf{u}_l,k} =\bigcup^m_{i=1}B_{p_i,\mathbf{u}_{l},k}, \\
  S_{\mathbf{u}_l,k} = S(A_{\mathbf{u}_l,k}, B_{\mathbf{u}_l,k}).
\end{gather}
We then define
\begin{equation}
  S_k = \bigsqcup^t_{l=1} S_{\mathbf{u}_l,k}.
\end{equation}

The set of nondegenerate solutions of $n(k)$ is $\mathcal{U}^\ast(n(k))$. Although the crucial primes and characteristic solutions are the same for all the $n(k)$, the nondegenerate solutions are not in general.  
\end{description}

\section{Proof of the invariance}\label{sec:proof}
Again, fix an integer $n\geq1$ with $10\nmid n$, $n\neq r(n)$, and $L$ decimal digits throughout this section. Assume that we have applied the general procedure to $n$ and to the $n(k)$, for all $k\geq1$, as described in Sections \ref{sec:genproc} and \ref{sec:genprocnk}. Moreover, all those notation are inherited. It suffices to prove that if $n(kj)$ is a $v$-palindrome, where $k,j\geq1$, then the type of $n(kj)$ with respect to $n$ is the same as that with respect to $n(k)$. That is,
\begin{equation}
  \mathbf{Type}(n(kj),n) = \mathbf{Type}(n(kj),n(k)).
\end{equation}
Assume that $\mathbf{Type}(n(kj),n) = \mathbf{u}_l$. This is equivalent to saying that $kj\in S_{\mathbf{u}_l}$. We need to show that $\mathbf{Type}(n(kj),n(k)) = \mathbf{u}_l$ too, i.e., $j\in S_{\mathbf{u}_l,k}$. We illustrate the $\mathbf{u}_l$-column in the second tables for $n$ and $n(k)$ as follows.
\begin{table}[H]
 \caption{The $\mathbf{u}_l$-column in the second table for $n$.}
 \label{table:indicatorfunB}
 \centering
  \begin{tabular}{cc}
   \hline
   $$ & $\mathbf{u}_l$\\
   \hline \hline
   $p_1$ & $(A_{p_1,\mathbf{u}_l},B_{p_1,\mathbf{u}_l})$\\
   $\vdots$ & $\vdots$\\
   $p_i$ & $(A_{p_i,\mathbf{u}_l},B_{p_i,\mathbf{u}_l})$\\
   $\vdots$ &$\vdots$\\
   $p_m$ & $(A_{p_m,\mathbf{u}_l},B_{p_m,\mathbf{u}_l})$\\
   \hline
  \end{tabular}
\end{table}
Visually speaking, that $kj\in S_{\mathbf{u}_l}$ means exactly that $kj$ is divisible by every number which appears in the left coordinate of an entry in the above column, and indivisible by every number which appears in the right coordinate of an entry in the above column.
\begin{table}[H]
 \caption{The $\mathbf{u}_l$-column in the second table for $n(k)$.}
 \label{table:indicatorfunB}
 \centering
  \begin{tabular}{cc}
   \hline
   $$ & $\mathbf{u}_l$\\
   \hline \hline
   $p_1$ & $(A_{p_1,\mathbf{u}_l,k},B_{p_1,\mathbf{u}_l,k})$\\
   $\vdots$ & $\vdots$\\
   $p_i$ & $(A_{p_i,\mathbf{u}_l,k},B_{p_i,\mathbf{u}_l,k})$\\
   $\vdots$ &$\vdots$\\
   $p_m$ & $(A_{p_m,\mathbf{u}_l,k},B_{p_m,\mathbf{u}_l,k})$\\
   \hline
  \end{tabular}
\end{table}
Similarly, that $j\in S_{\mathbf{u}_l,k}$ means exactly that $j$ is divisible by every number which appears in the left coordinate of an entry in the above column, and indivisible by every number which appears in the right coordinate of an entry in the above column. Hence it suffices to prove that $j\in S(A_{p_i,\mathbf{u}_l,k},B_{p_i,\mathbf{u}_l,k})$, for all $1\leq i\leq m$.

We focus on an arbitrary $(A_{p_i,\mathbf{u}_l,k},B_{p_i,\mathbf{u}_l,k})$ and denote $p=p_i$, $\mathbf{u}=\mathbf{u}_l$, $\delta = \delta_i$, $u=u_{li}$, $\mu_i = \mu$, and $x_k=x_{ik}$, and $(A,B) = (A_{p_i,\mathbf{u}_l},A_{p_i,\mathbf{u}_l})$ and $(A_k,B_k) = (A_{p_i,\mathbf{u}_l,k},B_{p_i,\mathbf{u}_l,k})$. Hence we need to prove that $j\in S(A_{k},B_k)$. We divide our consideration into various cases, according to the prime $p$ and $D=D(p,|\delta|,u,\mu)$, in reference to how the second table for $n$ is determined via \eqref{defnT1}, \eqref{defnT2}, and \eqref{defnT3}. Just as the determination of $(A,B)$ depends on $D$, the determination of $(A_k,B_k)$ depends on $D_k = D(p,|\delta|,u,\mu+x_k)$. By Lemma \ref{lem:lemh}, if $p\notin\{2,5\}$, then
\begin{align}
     h_{p,Lk} &= \frac{h_{p^{1+\ord_p(\rho_{k,L})},L}}{(k,h_{p^{1+\ord_p(\rho_{k,L})},L})}=\frac{h_{p^{1+x_k},L}}{(k,h_{p^{1+x_k},L})},\\
     h_{p^2,Lk} &= \frac{h_{p^{2+\ord_p(\rho_{k,L})},L}}{(k,h_{p^{2+\ord_p(\rho_{k,L})},L})}=\frac{h_{p^{2+x_k},L}}{(k,h_{p^{2+x_k},L})}.
   \end{align}
   We will be using the above equalities in the following case analysis.
\begin{description}
  \item[{($p\notin\{2,5\}$ and $D=[\mathrm{i}]$)}] We have $(A,B) = (\varnothing,\{h_{p,L}\})$. Thus $h_{p,L}\nmid kj$, and so $h_{p,L}\nmid k$. Consequently, by Lemma \ref{lem:rhodiv}, $p\nmid \rho_{k,L}$, and so $x_k=0$. Therefore $D_k = [\mathrm{i}]$ too, and so $(A_k,B_k) = (\varnothing,\{h_{p,Lk}\})$. Since $x_k=0$, $h_{p,Lk} = h_{p,L}/(k,h_{p,L})$. Assume on the contrary that $h_{p,Lk}\mid j$, then
  \begin{equation}
    h_{p,L}\mid (k,h_{p,L})j \mid kj,
  \end{equation}
  a contradiction to $h_{p,L}\nmid kj$. Whence $h_{p,Lk}\nmid j$, i.e., $j\in S(A_k,B_k)$.
  \item[{($p\notin\{2,5\}$ and $D=[\mathrm{ii}]$)}] We have $(A,B) = (\{h_{p,L}\},\{h_{p^2,L}\})$. Thus $h_{p,L}\mid kj$ and $h_{p^2,L}\nmid kj$, and so $h_{p^2,L}\nmid k$. Consequently, by Lemma \ref{lem:rhodiv}, $p^2\nmid \rho_{k,L}$, and so $x_k\leq 1$.
  
  In case $x_k =0$, $D_k = [\mathrm{ii}]$ too, and so $(A_k,B_k) = (\{h_{p,Lk}\},\{h_{p^2,Lk}\})$. We have $h_{p,Lk} = h_{p,L}/(k,h_{p,L})$ and $h_{p^2,Lk} = h_{p^2,L}/(k,h_{p^2,L})$.
  Since $h_{p,L}\mid kj$, $h_{p,Lk}\mid j$.
  Assume on the contrary that $h_{p^2,Lk}\mid j$, then
  \begin{equation}
    h_{p^2,L}\mid (k,h_{p^2,L})j \mid kj,
  \end{equation}
  a contradiction to $h_{p^2,L}\nmid kj$. Whence $h_{p^2,Lk}\nmid j$, and so $j\in S(A_k,B_k)$.
  
  In case $x_k =1$, $D_k = [\mathrm{i}]$, and so $(A_k,B_k) = (\varnothing,\{h_{p,Lk}\})$. We have $h_{p,Lk} = h_{p^2,L}/(k,h_{p^2,L})$. 
  Assume on the contrary that $h_{p,Lk}\mid j$, then
  \begin{equation}
    h_{p^2,L}\mid (k,h_{p^2,L})j \mid kj,
  \end{equation}
  a contradiction to $h_{p^2,L}\nmid kj$. Whence $h_{p,Lk}\nmid j$, i.e., $j\in S(A_k,B_k)$.
  \item[{($p\notin\{2,5\}$ and $D=[\mathrm{iii}]$)}] We have $(A,B) = (\varnothing,\{h_{p^2,L}\})$. Thus $h_{p^2,L}\nmid kj$, and so $h_{p^2,L}\nmid k$. Consequently, by Lemma \ref{lem:rhodiv}, $p^2\nmid \rho_{k,L}$, and so $x_k\leq 1$.
  
  In case $x_k =0$, $D_k = [\mathrm{iii}]$ too, and so $(A_k,B_k) = (\varnothing,\{h_{p^2,Lk}\})$. We have $h_{p^2,Lk} = h_{p^2,L}/(k,h_{p^2,L})$. 
  Assume on the contrary that $h_{p^2,Lk}\mid j$, then
  \begin{equation}
    h_{p^2,L}\mid (k,h_{p^2,L})j \mid kj,
  \end{equation}
  a contradiction to $h_{p^2,L}\nmid kj$. Whence $h_{p^2,Lk}\nmid j$, i.e., $j\in S(A_k,B_k)$.
  
  In case $x_k =1$, $D_k = [\mathrm{i}]$, and so $(A_k,B_k) = (\varnothing,\{h_{p,Lk}\})$. We have $h_{p,Lk} = h_{p^2,L}/(k,h_{p^2,L})$.  Assume on the contrary that $h_{p,Lk}\mid j$, then
  \begin{equation}
    h_{p^2,L}\mid (k,h_{p^2,L})j \mid kj,
  \end{equation}
  a contradiction to $h_{p^2,L}\nmid kj$. Whence $h_{p,Lk}\nmid j$, i.e., $j\in S(A_k,B_k)$.
  \item[{($p\notin\{2,5\}$ and $D=[\mathrm{iv}]$)}] We have $(A,B) = (\{h_{p,L}\},\varnothing)$. Thus $h_{p,L}\mid kj$. 
  
  In case $x_k = 0$, $D_k = [\mathrm{iv}]$ too, and so $(A_k,B_k) = (\{h_{p,Lk}\},\varnothing)$. We have $h_{p,Lk} = h_{p,L}/(k,h_{p,L})$.
  Since $h_{p,L}\mid kj$, it is evident that $h_{p,Lk}\mid j$, i.e., $j\in S(A_k,B_k)$.
  
  In case $x_k \geq1 $, $D_k = [\mathrm{vi}]$, and so $(A_k,B_k) = (\varnothing,\varnothing)$. Whence $j\in S(A_k,B_k)$ holds trivially.
  \item[{($p\notin\{2,5\}$ and $D=[\mathrm{v}]$)}] We have $(A,B) = (\{h_{p^2,L}\},\varnothing)$. Thus $h_{p^2,L}\mid kj$.
  
  In case $x_k = 0$, $D_k = [\mathrm{v}]$ too, and so $(A_k,B_k) = (\{h_{p^2,Lk}\},\varnothing)$. We have $h_{p^2,Lk} = h_{p^2,L}/(k,h_{p^2,L})$. 
  Since $h_{p^2,L}\mid kj$, it is evident that $h_{p^2,Lk}\mid j$, i.e., $j\in S(A_k,B_k)$.
  
  In case $x_k = 1$, $D_k = [\mathrm{iv}]$, and so $(A_k,B_k) = (\{h_{p,Lk}\},\varnothing)$. We have $h_{p,Lk} = h_{p^2,L}/(k,h_{p^2,L})$. Since $h_{p^2,L}\mid kj$, it is evident that $h_{p,Lk}\mid j$, i.e., $j\in S(A_k,B_k)$.
  
  In case $x_k\geq2$, $D_k=[\mathrm{vi}]$, and so $(A_k,B_k) = (\varnothing,\varnothing)$. Whence $j\in S(A_k,B_k)$ holds trivially.
  \item[{($p\in\{2,5\}$ and $D\in\{[\mathrm{i}],[\mathrm{iii}]\}$)}] We have $(A,B)=(\varnothing,\varnothing)$. Since $p\in\{2,5\}$, $x_k=\ord_p(\rho_{k,L})=0$, and so $D_k=D$ still. Thus $(A_k,B_k) = (\varnothing,\varnothing)$. Whence $j\in S(A_k,B_k)$ holds trivially.
  \item[{($D=[\mathrm{vi}]$)}] We have $(A,B)=(\varnothing,\varnothing)$. We see that irregardless of $x_k$, $D_k=[\mathrm{vi}]$ too, and so $(A_k,B_k) = (\varnothing,\varnothing)$. Whence $j\in S(A_k,B_k)$ holds trivially.
  \item[{($p\in\{2,5\}$ and $D\in\{[\mathrm{ii}],[\mathrm{iv}],[\mathrm{v}]\}$, or $D=[\mathrm{vii}]$)}] We have $(A,B) = (\varnothing, \{1\})$. Thus $1\nmid kj$. But this is impossible, so actually this case cannot happen.
\end{description}

\iffalse
\subsection*{Acknowledgements}
\iffalse The author is grateful to Professor Kohji Matsumoto for comments that improved the presentation of this paper.\fi\fi

% -----------------
\bibliography{refs}{}
\bibliographystyle{plain}
% -----------------

\end{document}